\newtheorem{thm}{Theorem}[]
\newtheorem{lem}[thm]{Lemma}
\newenvironment{definition}[1][Definition]{\begin{trivlist}
\item[\hskip \labelsep {\bfseries #1}]}{\end{trivlist}}
\newcommand{\RR}{\mathbb{R}}      
\newcommand{\NN}{\mathbb{N}} 
\newcommand{\PP}{\mathbb{P}} 
\newcommand{\EE}{\mathbb{E}} 
\newcommand{\ind}{\mathbbm{1}}
\begin{document}

\nocite{*}

\title{Dean-Kawasaki Dynamics:\\
Ill-posedness vs.\ Triviality}

\author{Tobias Lehmann, Vitalii Konarovskyi, Max-K. von Renesse \\ 
\\
University of Leipzig}
   
\maketitle
 
\begin{abstract}
 We prove that the Dean-Kawasaki SPDE admits a solution only in integer 
parameter regimes, in which case the solution is given in terms of a 
system of non-interacting particles.
\end{abstract}
\bigskip\par
\noindent  \textit{MSC subject classification}: 60H15, 60G57, 43A35, 82C31\par
\noindent\textit{Keywords}: SPDE, Wasserstein diffusion, Laplace duality, viscous Hamilton-Jacobi equation, probability generating function

\begin{center}
1. I{\footnotesize INTRODUCTION}
\end{center}

In this paper we show a dichotomy of non-existence vs.\ triviality of solutions 
to a certain 
class of  nonlinear SPDE which arise e.g.\ in macroscopic fluctuation theory 
in physics. As a prototype one might consider the model  
\begin{equation}\label{Deanphys}
\partial_{t}\rho=T\Delta\rho+\nabla\cdot\left(\sqrt{T\rho}\dot{W}\right),
\end{equation} 
which is a particular instance of the more general class of 
formal Ginzburg-Landau stochastic phase field models 
 (cf.\cite{spohn1991large}) of the form  
\begin{equation*}
\partial_{t}\phi+\nabla\cdot \left(-L(\phi)\nabla\frac{\delta 
H}{\delta\phi}(\phi)+\sqrt{TL(\phi)}\dot{W} \right)=0,
\end{equation*} 
where $H$ is a Hamiltonian, $L$ is an Onsager coefficient and $\dot{W}$ is 
vector valued space-time white noise. The particular equation 
\eqref{Deanphys} 
was proposed 
independently in \cite{dean1996langevin} and \cite{Kawasaki1973} as mesoscopic 
description for interacting particles and is  referred to  as 
Dean-Kawasaki equation today. Since then, together with several variants, it  
has been an active research topic in various branches 
of non-equilibrium statistical mechanics over the last years
(c.f.\cite{delfau2016pattern,dirr,frusawa2000controversy,2014JPhA...47V5001J, PhysRevE.89.012150, 
marconi1999dynamic,velenich2008brownian}). Mathematically,  interest in 
equations like \eqref{Deanphys} comes from the fact that it appears to describe
 an 'intrinsic' random perturbation of the gradient flow for the 
entropy functional on Wasserstein space by a noise which is locally uniformly 
distributed in terms of dissipated energy, e.g.\ by a noise that is aligned 
with  Otto's formal Riemannian 
structure \cite{otto2001geometry} for optimal transportation. To see this, 
consider the rescaled  Hamiltonian ${
\mathcal{H}(\mu,\phi):=\lim_{\epsilon\to 0}\epsilon e^{-\frac{1}{\epsilon}\langle\mu,\phi\rangle}\mathcal{G}^{\epsilon}e^{\frac{1}{\epsilon}\langle\mu,\phi\rangle}}$,
with $\mathcal{G}^{\epsilon}$ being the generator associated to $\lbrace 
\mu_{\epsilon t}\rbrace_{t\ge 0}$.
Following \cite{DJEHICHE1999467}, one expects the short time asymptotics of $\mu$ to be governed by the large deviation rate function
\begin{equation*}
\mathcal{A}(\nu)=\int_{0}^{1}\sup_{\phi\in C^{\infty}_{0}(M)}\lbrace \langle\dot{\nu},\phi\rangle-\mathcal{H}(\nu,\phi)\rangle\rbrace dt,
\end{equation*}
for regular curves $\nu$ in the Wasserstein-space. But since 
$\mathcal{H}(\mu,\phi)=\frac{1}{2}\langle\mu,|\nabla\phi|^{2}\rangle$, we see 
that $\mathcal{A}$ is precisely the energy functional which determines the 
Wasserstein-metric by means of the Benamou-Brenier-formula (c.f. 
\cite{Benamou2000} and Appendix D in \cite{feng2006large}).\par 

In spite of significant continuing interest in  Dean-Kawasaki type models 
both  in physics and mathematics, 
rigorous results on existence and uniqueness exist so far only for appropriate 
regularisations of the original equation, see e.g. \cite{ 2018arXiv80201716C, fehrman2017well, 
mariani2010large}. On the other hand, in \cite{von2009entropic} Sturm and the 
third author succeeded in constructing a process having the Wasserstein distance 
as its intrinsic metric and which can formally be seen as a solution to the 
SPDE 
\begin{equation*}\label{vRS}
\partial_{t}\mu=\alpha\Delta\mu+\Xi(\mu)+\nabla\cdot\left(\sqrt{\mu}\dot{W}\right),
\end{equation*}
where $\Xi$ is some nonlinear operator. Particle approximations of these
dynamics as well as analytic and geometric properties of the corresponding 
entropic measure were investigated afterwards in 
\cite{andres2010particle,sturm2014monotone} and \cite{CHODOSH20124570, 
doring2009logarithmic}, respectively. Based on Arratia flows, a new candidate 
for a process with Wasserstein-short-time asymptotics, but with different drift 
component as in \cite{von2009entropic} was recently studied by the latter two 
authors in \cite{konarovskyi2015modified,2017arXiv70902839K} and subsequently, in \cite{marx2017new}.\par

\smallskip
The main contribution of this note asserts   
that some correction term $\Xi$  is in fact  necessary for 
the existence of 
(nontrivial)  solutions to these DK-type models.  More precisely, in  Theorem 
\ref{mainthm2} below  we find that  the (uncorrected) generalized  
Dean-Kawasaki equation
\begin{equation}\label{DE}
\begin{aligned}
\partial_{t}\mu&=\frac{\alpha}{2}\Delta\mu+\nabla\cdot\left(\sqrt{\mu}\dot{W}\right)\\
\mu_{|t=0}&=\mu_{0},
\end{aligned}\tag*{$(\text{DK})^{\alpha}_{\mu_{0}}$},
\end{equation}
corresponding to a Ginzburg-Landau model with $H = \alpha \mbox{Ent}$, $T=1$ 
and $L =$ identity operator, admits solutions only for a discrete spectrum of 
parameters $\alpha$ and atomic intial measures. Moreover, for these particular 
choices solutions are trivial, in being just 'measure-valued lifts' of the 
martingale-problem for $\frac{\alpha}{2}\Delta$.\par Finally,  given  the 
apparent similarity of \ref{DE} to the SPDE description 
of the Dawson-Watanbe ('Super Brownian Motion') process  
\[ \partial_{t}\mu = \beta \Delta \mu  + \sqrt \mu \dot{W},\]
admitting unique in law solutions for every $\beta >0$, our result is 
interesting also from an independent SPDE point of view. 
  
\pagebreak 
  
      
  
 \begin{center}
2. S{\footnotesize TATEMENT AND PROOF OF THE MAIN RESULT}
\end{center}
As for notation, given a Polish space $E$, we will write 
$\mathcal{M}_{1}(E)$ for the set of all probability measures on $E$ and for any 
function $f$ on $E$, we write as usual 
$\langle\mu,f\rangle=\int_{E}f(x)\mu(dx)$, whenever the integral is 
well-defined. By $C_{b}(E)$ we denote space of real-valued, bounded continuous 
functions on $E$.\par 
Let us briefly motivate, what we will refer to as a solution to the Dean-Kawasaki equation. Typically, one would call a time-continuous process $t\mapsto\mu_{t}$, which takes values in absolutely continuous measures on $\mathbb{R}^{d}$, a solution to \ref{DE}, provided that for all $t\in[0,T]$ and ${\phi\in\mathcal{S}(\RR^{d})}$ (Schwartz-space) it holds true that $\PP$-a.s
\begin{equation}\label{eqn: weak}
\langle\mu_{t},\phi\rangle=\langle\mu_{0},\phi\rangle+\frac{\alpha}{2}\int_{0}^{t}\langle\mu_{s},\Delta\phi\rangle ds-\sum_{i=1}^{d}\int_{0}^{t}\int_{\RR^{d}}\sqrt{\mu_{s}(x)}\partial_{i}\phi(x)W^{i}(dsdx),
\end{equation}
with $W^{i}$ being mutually independent space-time white noises (for instance in the sense of Walsh \cite{walsh1986introduction}).\par 

Of course, for such a process $\mu$ we knew that
\begin{equation*}
[0,T]\ni t\mapsto\langle\mu_{t},\phi\rangle-\langle\mu_{0},\phi\rangle-\frac{\alpha}{2}\int_{0}^{t}\langle\mu_{s},\Delta\phi\rangle ds
\end{equation*}
is martingale with quadratic variation
\begin{equation*}
\int_{0}^{t}\langle\mu_{s},|\nabla\phi|^{2}\rangle ds.
\end{equation*}
Rather then the weak formulation in (\ref{eqn: weak}), it is this martingale characterisation that we will study in the following, however in slightly more general setup.\par

Let $E$ be a Polish state-space and $(E, \pi, \Gamma)$ be the Markov-triple associated to some symmetric Markov Diffusion operator $L$ in the classical sense, as e.g. in \cite{bakry2013analysis}. In this setting, we know $L$ has the diffusion property, i.e.
\begin{equation*}
L\psi(f)=\psi'(f)Lf+\psi''(f)\Gamma f
\end{equation*}
for every $\psi\in C^{2}$ and $f\in\mathcal{D}(L)$. Here, $\Gamma$ is the is the carr\'e du champs operator, defined on some algebra $\mathcal{A}\subset C^{b}(E)$ which is dense in $L^{p}(\pi)$, by
\begin{equation*} 
\Gamma(f,g)=\frac{1}{2}(L(fg)-fLg-gLf)
\end{equation*}
for $(f,g)\in\mathcal{A}\times\mathcal{A}$ and $\Gamma f=\Gamma(f,f)$.
Additionally, we impose henceforth the following regularity hypothesis: the Markov-semigroup $P_{t}$ belonging to $L$, satisfies a gradient bound, i.e. for some $\rho\in\RR$, we have
\begin{equation}\label{eqn: gradbound}
\Gamma P_{t}f\le e^{-2\rho t}P_{t}\Gamma f.
\end{equation}
This requirement is certainly fulfilled for the Brownian semigroup on Euclidean space, but also for instance when $L=\Delta_{\mathfrak{g}}$ is the Laplace-Beltrami operator on a Riemannian manifold $(E=M,\mathfrak{g})$ with Ricci curvature bounded from below.

\par 

\begin{definition}
Let $\alpha>0$, $\mu_{0}\in\mathcal{M}^{1}(E)$ and $(\Omega,\mathcal{F},(\mathcal{F}_{t})_{t\ge0},\PP)$ be some probability base. We say that a $\PP$-almost surely continuous $\mathcal{M}^{1}(E)$-valued process $\mu$ is a solution to the martingale problem $(\text{MP})^{\alpha}_{\mu_{0}}$ iff
\begin{enumerate}
\item for all $\phi\in\mathcal{D}(L)$
\begin{equation}\label{mart}
M_{t}(\phi):=\langle\mu_{t},\phi\rangle-\langle\mu_{0},\phi\rangle-\frac{\alpha}{2}\int_{0}^{t}\langle\mu_{s},L\phi\rangle ds,\quad t\in[0,T],
\end{equation} 
is an $\mathcal{F}_{t}$-adapted martingale, 
\item whose quadratic variation is given by
\begin{equation*}
\langle M(\phi)\rangle_{t}=\int_{0}^{t}\langle\mu_{s},\Gamma\phi\rangle ds.
\end{equation*}
\end{enumerate}
\end{definition}  

With this notation, our main results reads as follows. 

\begin{thm}\label{mainthm2}
Solutions to $(\operatorname{MP})^{\alpha}_{\mu_{0}}$ exist, if and only if $\alpha=n\in\NN$ and 
\begin{equation}
\mu_{0}=\frac{1}{n}\sum_{i=1}^{n}\delta_{x_{i}},
\end{equation}
with $x_{1},\dots,x_{n}\in E$. In case of existence, the solution is, uniquely in law, given by the empirical measure
\begin{equation}\label{eqn: empmeas}
\mu_{t}=\frac{1}{n}\sum_{i=1}^{n}\delta_{X^{i}_{nt}},
\end{equation}
where the $X^{i}$ are $n$ independent diffusion processes, each with generator $\frac{1}{2}L$ and starting point $x_{i}$.
\end{thm}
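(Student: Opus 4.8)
The plan is to use Laplace duality to convert the measure-valued martingale problem into a dual PDE, exploit the structure of that PDE to force integrality of $\alpha$, and then identify the solution with the empirical measure of independent diffusions.

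\bigskip

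\noindent\textbf{Step 1: Laplace duality and the dual equation.}
First I would test the martingale problem against exponential functionals. Given a solution $\mu$ to $(\mathrm{MP})^{\alpha}_{\mu_0}$ and a nice function $\phi$, apply It\^o's formula to $\exp(-\langle\mu_t,\phi\rangle)$ using (\ref{mart}) and the quadratic variation $\int_0^t\langle\mu_s,\Gamma\phi\rangle\,ds$. This produces
\begin{equation*}
d\,e^{-\langle\mu_t,\phi\rangle}
= e^{-\langle\mu_t,\phi\rangle}\Bigl(-\tfrac{\alpha}{2}\langle\mu_t,L\phi\rangle + \tfrac12\langle\mu_t,\Gamma\phi\rangle\Bigr)dt
+ d(\text{mart}).
\end{equation*}
One then seeks a time-dependent family $\phi_t$ solving the viscous Hamilton-Jacobi equation
$\partial_t\phi_t = \tfrac{\alpha}{2}L\phi_t - \tfrac12\Gamma\phi_t$
(the PDE advertised in the keywords), so that the drift telescopes and $t\mapsto\EE[\exp(-\langle\mu_t,\phi_{T-t}\rangle)]$ becomes constant, giving the duality identity $\EE[e^{-\langle\mu_T,\phi\rangle}] = e^{-\langle\mu_0,\phi_T\rangle}$. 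The substitution $u = e^{-\phi}$ should linearize this: a short computation with the diffusion property of $L$ converts the viscous HJB equation into the \emph{linear} heat equation $\partial_t u = \tfrac{\alpha}{2}Lu$ for $u$, so $u_t = P^{\alpha/2}_t u_0$ with $u_0 = e^{-\phi}$. Hence the Laplace transform of $\mu_T$ is pinned down: $\EE[e^{-\langle\mu_T,\phi\rangle}] = \langle\mu_0,(P^{\alpha/2}_T e^{-\phi})\rangle$ — wait, more precisely it is $\exp$ of something, and this is exactly where the constraint emerges.

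\bigskip

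\noindent\textbf{Step 2: Integrality of $\alpha$ and atomicity of $\mu_0$.}
The key point is that $\langle\mu_0,\phi_T\rangle$ must have the form $-\log$ of a probability Laplace functional, i.e. $e^{-\langle\mu_0,\phi_T\rangle}$ must be a genuine Laplace transform of a probability measure on $\mathcal{M}^1(E)$, for \emph{every} admissible $\phi\ge 0$. Writing $\phi = -\log u$ and using $\phi_T$ determined by the linear flow, $e^{-\langle\mu_0,\phi_T\rangle}$ becomes (schematically) $\exp\bigl(\langle\mu_0,\log P^{\alpha/2}_T u\rangle\bigr)$. For this to be completely monotone / a valid generating object as $u$ ranges over $[0,1]$-valued functions, I expect to reduce to a scalar problem: fixing a point mass probe, one is forced to ask when $p\mapsto \exp(c\log(\text{something}))= (\text{something})^{c}$ remains a probability generating function, and $x^{\alpha}$ is a p.g.f. in $x$ only when $\alpha\in\NN$. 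This is the "probability generating function" ingredient in the keywords. The atomic structure $\mu_0 = \tfrac1n\sum\delta_{x_i}$ should fall out simultaneously: expanding $\exp(n\langle\bar\mu_0,\log P^{1/2}_{nT}u\rangle)$ and demanding it be the Laplace functional of a probability measure supported on $\mathcal{M}^1(E)$ (whose total mass is $1$, forcing the $\tfrac1n$ normalization and $n$ atoms) pins down both $\alpha=n$ and the form of $\mu_0$. The gradient bound (\ref{eqn: gradbound}) is presumably what guarantees the dual equation is well-posed and the exponential functionals are integrable, so that the duality is rigorous rather than formal.

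\bigskip

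\noindent\textbf{Step 3: Identification and uniqueness.}
Once $\alpha = n$, observe that the empirical measure $\mu_t = \tfrac1n\sum_{i=1}^n\delta_{X^i_{nt}}$ of $n$ independent $\tfrac12L$-diffusions does solve $(\mathrm{MP})^{n}_{\mu_0}$: test against $\phi\in\mathcal{D}(L)$, apply It\^o to each $\phi(X^i_{nt})$ (generator $\tfrac{n}{2}L$ after time-change), sum and divide by $n$ to get the drift $\tfrac{n}{2}\int_0^t\langle\mu_s,L\phi\rangle ds$ as in (\ref{mart}); the bracket computation, using independence of the $X^i$, yields exactly $\int_0^t\langle\mu_s,\Gamma\phi\rangle ds$ because the cross terms vanish and each diagonal term contributes $\Gamma\phi(X^i)$. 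For uniqueness in law, the Laplace duality from Step 1 shows every solution has the same one-dimensional marginals; upgrading to finite-dimensional distributions is standard by running the duality from each time point and using the Markov property, or by noting the martingale problem together with the fixed marginals determines the law on path space.

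\bigskip

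\noindent\textbf{Main obstacle.}
I expect the hard part to be Step 2: making rigorous the passage from "the Laplace functional has such-and-such a closed form" to "therefore $\alpha$ must be a positive integer and $\mu_0$ atomic." This requires carefully characterizing which functions of the form $\exp(c\langle\mu_0,\log(\cdot)\rangle)$ are Laplace transforms of probability measures on the space of probability measures — essentially a Hausdorff-type moment/complete-monotonicity argument combined with the observation that fractional powers of p.g.f.'s fail to be p.g.f.'s. Getting the duality itself to close rigorously (integrability of $e^{-\langle\mu_t,\phi_s\rangle}$, admissibility of the time-dependent test functions $\phi_s$, justification of It\^o's formula in infinite dimensions) is a secondary technical hurdle where the gradient bound (\ref{eqn: gradbound}) will be used.
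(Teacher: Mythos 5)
Your Steps 1 and 3 match the paper: the duality $\EE e^{-\langle\mu_t,f\rangle}=e^{-\langle\mu_0,V_tf\rangle}$ with the Cole--Hopf solution $V_tf=-\alpha\ln(P_te^{-f/\alpha})$ of the viscous Hamilton--Jacobi equation is exactly how uniqueness in law is obtained, and the verification that the empirical measure of $n$ independent $\tfrac12L$-diffusions (time-changed by $n$) solves $(\operatorname{MP})^{n}_{\mu_0}$ is the same direct computation. The genuine gap is in Step 2, which you yourself flag as the main obstacle but whose sketched resolution does not work as stated. Your reduction is to the claim that ``fractional powers of p.g.f.'s fail to be p.g.f.'s.'' That observation only rules out candidates that are already known to be $\NN_0$-valued; but a priori the random variable in question, $X=\alpha\mu_t(A)$, is an arbitrary $[0,\alpha]$-valued random variable, and the duality formula $\EE s^{X}=e^{\alpha\langle\mu_0,\ln(1+(s-1)h)\rangle}$ (with $h=P_t\ind_A$, $s=e^{-r}$) could conceivably be the Laplace transform of some non-lattice law; deciding this amounts to a complete-monotonicity question that your proposal does not settle, and the Hausdorff-type characterization of Laplace functionals on $\mathcal{M}_1(E)$ you suggest is both harder than needed and not carried out.

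The paper closes exactly this gap with a converse generating-function lemma (Lemma \ref{lemma_expansion}): if a \emph{nonnegative real} random variable $X$ has $\EE s^{X}=\sum_{k=0}^{n}p_ks^k+o(s^n)$ as $s\to0+$ for every $n$, then $X\in\NN_0$ a.s.\ and $\PP\{X=k\}=p_k$ (proved by induction, splitting off $\ind_{\{X\in(n-1,n)\}}$ and using dominated convergence). Since $h\le 1-\delta$ on bounded $A$, the dual expression is smooth at $s=0$, so Taylor's theorem plus the lemma give $\alpha\mu_t(A)\in\NN_0$ a.s. Note also that integrality of $\alpha$ is then \emph{not} deduced from positivity of binomial-type coefficients, but from the total-mass constraint: $\mu_t(A)\in\{0,\tfrac1\alpha,\dots,\tfrac{\lfloor\alpha\rfloor}{\alpha}\}$ for all bounded $A$, and letting $A\uparrow E$ forces $1=\mu_t(E)\le\lfloor\alpha\rfloor/\alpha$, i.e.\ $\alpha\in\NN$; the atomic form of $\mu_0$ then follows by continuity of $t\mapsto\mu_t$ at $t=0$ applied to bounded sets with $\mu_0$-null boundary, a step your sketch leaves vague. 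Without the lemma (or an equivalent argument excluding non-integer-valued $\mu_t(A)$), your Step 2 does not go through.
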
 

Certainly, the Dean-Kawasaki dynamic fits in as the special case of taking $L=\Delta$ on $E=\RR$.\par 

Theorem \ref{mainthm2} will be proven in several steps. We start by the almost trivial observation that the empirical measures (\ref{eqn: empmeas}) provide solutions to $(\operatorname{MP})^{n}_{\mu_{0}}$. Consider first the case $n=1$. Plugging $\mu_{t}=\delta_{X_{t}}$ into the martingale problem, immediately yields that
\begin{equation*}
M_{t}(\phi)=\phi(X_{t})-\phi(X_{0})-\frac{1}{2}\int_{0}^{t}L\phi(X_{s})ds
\end{equation*}
is a martingale, plainly since $X$ is the solution of the martingale problem for $\frac{1}{2}L$. Also necessarily, the quadratic variation of $M$ satisfies
\begin{equation*}
\langle M(\phi)\rangle_{t}=\int_{0}^{t}\Gamma\phi(X_{s})ds=\int_{0}^{t}\langle\mu_{s},\Gamma\phi\rangle ds.
\end{equation*}
For the general case, denote by $M^{i}$ the martingale associated to $X^{i}$. Then
\begin{equation*}
M_{t}(\phi)=\frac{1}{n}\sum_{i=1}^{n}\left(\phi(X^{i}_{nt})-\phi(X^{i}_{0})-\frac{n}{2}\int_{0}^{t}L\phi(X^{i}_{ns})ds\right)=\frac{1}{n}\sum_{i=1}^{n}M^{i}_{nt}(\phi)
\end{equation*}
is a $\mathcal{F}_{nt}$-martingale with quadratic variation
\begin{equation*}
\langle M(\phi)\rangle_{t}=\frac{1}{n^{2}}\sum_{i=1}^{n}\langle M^{i}(\phi)\rangle_{nt}=\frac{1}{n}\sum_{i=1}^{n}\int_{0}^{t}\Gamma\phi(X_{ns})ds=\int_{0}^{t}\langle\mu_{s},\Gamma\phi\rangle ds,
\end{equation*} 
as needed.\par
Our next aim is to show that these solutions are unique in law. In fact, the statement that we prove is much stronger, namely that any solution, provided its existence, must be unique in law. The proof adopts the argument which is used in order proof weak uniqueness for super-Brownian motion, by Laplace-duality to some reaction-diffusion equation (c.f.\cite{fitzsimmons1988construction, mytnik1998weak, perkins2002part}).\par 

Before we present the duality statement in our context, we provide some preliminary considerations on viscous Hamilton-Jacobi equations. That is, we regard for some initial datum $f\in C_{b}(E)$, the PDE
\begin{equation}\label{eqn: vHJ}
\begin{aligned} 
\partial_{t} v&=\frac{\alpha}{2}Lv-\frac{1}{2}\Gamma v,\\
v_{|t=0}&=f, 
\end{aligned}\tag*{$(\text{vHJ})_{f}$}
\end{equation} 
where $L$ and $\Gamma$ are as before generator and carr\'e du champs operator of some symmetric Markov diffusion. Of course, the unique solution is just the classical Cole-Hopf-solution given by
\begin{equation*}
V_{t}f(x)=v(t,x)=-\alpha\ln (P_{t}e^{-\frac{1}{\alpha} f})(x),
\end{equation*}
where $P_{t}$ is now the heat semi-group associated to $\frac{\alpha}{2}L$. Indeed, plugging in the definition of $V_{t}f$ into the PDE, we see
\begin{equation*}
\partial_{t} V_{t}f=-\alpha ({P_{t}e^{-\frac{1}{\alpha} f}})^{-1}LP_{t}e^{-\alpha f}=-\frac{\alpha^{2}}{2} e^{\frac{1}{\alpha} V_{t}f}Le^{-\frac{1}{\alpha} V_{t}f}=\frac{\alpha}{2} LV_{t}f-\frac{1}{2}\Gamma V_{t}f.
\end{equation*}
Moreover, one can easily check that this solution satisfies maximum/minimum-principles, i.e.
\begin{equation*}
\inf_{E}f\le\inf_{E}V_{t}f\qquad\text{ and }\qquad\sup_{E}V_{t}f\le\sup_{E
}f
\end{equation*}
for $f\in C_{b}(E)$. Also, by the gradient bound (\ref{eqn: gradbound}), there is $\rho\in\RR$ such that for all $f\in\mathcal{A}$, it follows that
\begin{equation}\label{eqn: gradest}
\Gamma V_{t}f \le \alpha^{2}(P_{t}e^{-\frac{1}{\alpha}f})^{-2}e^{-2\rho t}P_{t}\Gamma e^{-\frac{1}{\alpha}f}\le  1\vee e^{2|\rho|T}e^{\frac{2}{\alpha}\operatorname{diam}f(E)}\|\Gamma f\|_{\infty}.
\end{equation} 
  
We can now formulate our duality result, which will be crucial not only for the uniqueness, but also for the discussion of non-existence later on.   

\begin{thm}\label{thm: duality}
Take $\alpha>0$. For $\mu_{0}\in\mathcal{M}_{1}(E)$ and $f\in C_{b}(E)$, let $\mu$ be a solution to $(\operatorname{MP})^{\alpha}_{\mu_{0}}$ and $V_{t}f$ a solution to $(\operatorname{vHJ})_{f}$.
Then
\begin{equation}\label{eqn: duality}
\EE^{\mu_{0}}e^{-\langle\mu_{t},f\rangle}=e^{-\langle\mu_{0},V_{t}f\rangle}.
\end{equation}
\end{thm}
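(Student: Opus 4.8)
The plan is the classical Laplace-duality argument. Fix the time horizon $t>0$ and consider the function
\[
F(s):=\EE^{\mu_{0}}\!\left[e^{-\langle\mu_{s},V_{t-s}f\rangle}\right],\qquad s\in[0,t].
\]
If one shows that $F$ is constant on $[0,t]$, then, since $V_{0}f=f$ and $\langle\mu_{0},V_{t}f\rangle$ is deterministic, we get $\EE^{\mu_{0}}e^{-\langle\mu_{t},f\rangle}=F(t)=F(0)=e^{-\langle\mu_{0},V_{t}f\rangle}$, which is exactly \eqref{eqn: duality}. So the whole proof reduces to the constancy of $F$.

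To establish this I would apply Itô's formula to the real semimartingale $s\mapsto G(s):=e^{-X_{s}}$, where $X_{s}:=\langle\mu_{s},V_{t-s}f\rangle$. The increment $dX_{s}$ receives two contributions. The evolution of the measure: treating $V_{t-s}f$ momentarily as a frozen test function, $(\operatorname{MP})^{\alpha}_{\mu_{0}}$ gives a drift $\tfrac{\alpha}{2}\langle\mu_{s},LV_{t-s}f\rangle\,ds$ together with a martingale increment $dM_{s}(V_{t-s}f)$ whose bracket is $\langle\mu_{s},\Gamma V_{t-s}f\rangle\,ds$. The explicit time-dependence of the test function: using $(\operatorname{vHJ})_{f}$ one has $\partial_{s}(V_{t-s}f)=-\tfrac{\alpha}{2}LV_{t-s}f+\tfrac12\Gamma V_{t-s}f$, contributing $\langle\mu_{s},-\tfrac{\alpha}{2}LV_{t-s}f+\tfrac12\Gamma V_{t-s}f\rangle\,ds$. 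The two $\tfrac{\alpha}{2}L$-terms cancel, so the drift of $X_{s}$ collapses to $\tfrac12\langle\mu_{s},\Gamma V_{t-s}f\rangle\,ds$; combining this with $d\langle X\rangle_{s}=\langle\mu_{s},\Gamma V_{t-s}f\rangle\,ds$ in Itô's formula, the drift of $G$ vanishes identically and
\[
dG(s)=-\,e^{-X_{s}}\,dM_{s}(V_{t-s}f).
\]
By the minimum principle $\inf_{E}V_{t-s}f\ge\inf_{E}f$ the factor $e^{-X_{s}}$ is bounded, and by the gradient estimate \eqref{eqn: gradest} the bracket $\langle\mu_{s},\Gamma V_{t-s}f\rangle$ is bounded uniformly on $[0,t]$, so the stochastic integral is a true martingale; taking expectations yields $F(s)\equiv F(0)$.

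The main obstacle is making the Itô computation rigorous, since $V_{t-s}f$ is in general not an admissible test function for $(\operatorname{MP})^{\alpha}_{\mu_{0}}$: it degenerates at $s=t$ to a mere $f\in C_{b}(E)$, and the Cole-Hopf formula $V_{r}f=-\alpha\ln(P_{r}e^{-f/\alpha})$ involves $e^{-f/\alpha}$, which need not belong to $\mathcal{D}(L)$ or to the algebra $\mathcal{A}$. I would deal with this by first carrying out the argument on $[0,t-\varepsilon]$: there the semigroup identity $V_{t-s}f=V_{\varepsilon}(V_{t-s-\varepsilon}f)$ together with the smoothing of $P_{\cdot}$ and the diffusion property $L\psi(g)=\psi'(g)Lg+\psi''(g)\Gamma g$ (to pass the $\ln$) place $V_{t-s}f$ in $\mathcal{D}(L)$, and the chain rule through the moving test function is justified by a Riemann-sum/time-discretization of $[0,t-\varepsilon]$, applying the plain martingale property of $M(\phi)$ on each subinterval with $\phi$ frozen and controlling the error by the local Lipschitz bounds on $r\mapsto V_{r}f$ that follow from \eqref{eqn: gradest} and the maximum principle. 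Letting $\varepsilon\downarrow0$ and using $\PP$-a.s.\ continuity of $s\mapsto\mu_{s}$ together with the uniform bounds then closes the gap. Finally, if it is convenient to assume extra regularity of $f$ (e.g.\ $e^{-f/\alpha}\in\mathcal{A}$) in order to run the above, \eqref{eqn: duality} extends to all $f\in C_{b}(E)$ by bounded pointwise approximation, since both sides are continuous under bounded convergence of $f$.
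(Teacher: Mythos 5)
Your proposal is correct and follows essentially the same route as the paper: apply It\^o's formula to $s\mapsto e^{-\langle\mu_{s},V_{t-s}f\rangle}$, use the Cole--Hopf solution of $(\operatorname{vHJ})_{f}$ so that the drift cancels against half the quadratic variation, verify the true martingale property via the minimum principle and the gradient estimate \eqref{eqn: gradest}, and extend from regular $f$ to all of $C_{b}(E)$ by approximation. Your extra care about $V_{t-s}f$ failing to be an admissible test function (the $\varepsilon$-cutoff and time discretization) addresses a point the paper passes over by simply assuming $f\in\mathcal{A}$, but it does not change the substance of the argument.
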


\begin{proof}
Assume for now $f\in\mathcal{A}$ and let $v(t,x)=V_{t}f(x)$ be the Cole-Hopf solution to $(\operatorname{vHJ})_{f}$. For $0\le s\le t\le T$, by It\={o}'s formula
\begin{align*}
de^{-\langle\mu_{s},v(t-s)\rangle}&=-e^{-\langle\mu_{s},v(t-s)\rangle}\left(\langle\mu_{s},(\partial_{s}+\frac{\alpha}{2}L)v(t-s)\rangle ds+dM_{s}(v(t-s))-\frac{1}{2}d\langle M(v(t-\cdot)\rangle_{s}\right)\\
&=e^{-\langle\mu_{s},v(t-s)\rangle}\left(\langle\mu_{s},(\partial_{s}v)(t-s)-\frac{\alpha}{2}L v(t-s)+\frac{1}{2}\Gamma v(t-s)\rangle ds+dM_{s}(v(t-s))\right)\\
&=e^{-\langle\mu_{s},v(t-s)\rangle}dM_{s}(v(t-s)).
\end{align*}
Hence for $t\in[0,T]$ and $s\le t$ the map $s\mapsto e^{-\langle\mu_{s},v(t-s)\rangle}$ is a local martingale. Since by (\ref{eqn: gradest})
\begin{equation*}
\EE\int_{0}^{T}e^{-2\langle\mu_{s},v(t-s)\rangle} \langle \mu_{s},\Gamma v(t-s)\rangle ds\le CTe^{(2+\frac{2}{\alpha})\operatorname{diam}f(E)}\|\Gamma f\|_{\infty}<\infty,
\end{equation*}
it is in fact a martingale. Therefore, upon choosing $s=t$ we find
\begin{equation*}
\EE^{\mu_{0}}e^{-\langle\mu_{t},f\rangle}=e^{-\langle\mu_{0},V_{t}f\rangle}.
\end{equation*}
By density of $\mathcal{A}$ in $C_{b}(E)$ the previous equation also holds for $f\in C_{b}(E)$, which yields the claim.
\end{proof}
Observe in particular, that the duality in Theorem \ref{thm: duality} determines the Laplace-transform of $\mu_{t}$ and by the same argument as in Theorem 11 of \cite{ELKAROUI1991239}, also the finite dimensional distributions of $\mu$ uniquely. Hence, we obtain uniqueness (in law) for solutions to the Dean-Kawasaki equation.\par
In order to develop our non-existence statement, we insert a short intermezzo on generating functions.\par 
Whereas of course, for the probability-generating function
\begin{equation}
g(s):=\mathbb{E}s^{X}=\sum_{k=0}^{\infty}p_{k}s^{k}
\end{equation}
of a some discrete random variable $X$ with values in $\NN^{0}=\{0,1,2,\dots\}$, one knows that ${p_{k}=\\P(X=k)}$, we are interested in the opposite direction and establish

\begin{lem}\label{lemma_expansion}
	Let $X$ be a non negative random variable, such that for each $n\in\NN^{0}$ 
	\begin{equation}\label{f_taylor_exp}
	g(s)=\EE s^{X}=\sum_{k=0}^ns^kp_k+o(s^n),\quad \mbox{as}\ \ s\to 0+,
	\end{equation}
	for some sequence $\{p_k,\  k\in\NN^{0}\}$. Then $X\in\NN^{0}$ a.s., $p_k\geq 0$ and 
	$$
	\PP\lbrace X=k\rbrace=p_k
	$$
	for each $k\in\NN$.
\end{lem}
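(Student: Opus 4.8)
The plan is to pass to the law $\mu$ of $X$ on $[0,\infty)$, so that $g(s)=\int_{[0,\infty)}s^{x}\,\mu(dx)$ for $s\in(0,1]$ (the integral converging since $0\le s^{x}\le 1$ when $x\ge 0$), and to prove by induction on $n\in\NN^{0}$ the statement
\[
S(n):\qquad \mu\big|_{[0,n]}=\sum_{k=0}^{n}p_{k}\delta_{k}\quad\text{and}\quad p_{k}\ge 0\ \ (0\le k\le n).
\]
Granting $S(n)$ for every $n$, one lets $n\to\infty$: for any Borel set $A\subseteq[0,\infty)$ one has $\mu(A)=\lim_{n}\mu(A\cap[0,n])=\sum_{k\in A\cap\NN^{0}}p_{k}$, hence $\mu=\sum_{k\ge 0}p_{k}\delta_{k}$, and since $\mu$ is a probability measure this forces $X\in\NN^{0}$ a.s.\ and $\PP\{X=k\}=p_{k}$, which is the claim.

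For the base case $n=0$, write $g(s)=\mu(\{0\})+\int_{(0,\infty)}s^{x}\,\mu(dx)$; on $(0,\infty)$ we have $s^{x}\le 1$ and $s^{x}\to 0$ as $s\to 0+$, so dominated convergence gives $g(s)\to\mu(\{0\})$, while the hypothesis \eqref{f_taylor_exp} with $n=0$ gives $g(s)\to p_{0}$. Hence $p_{0}=\mu(\{0\})\ge 0$, which is $S(0)$.

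For the inductive step, assume $S(n-1)$. Then $\int_{[0,n-1]}s^{x}\,\mu(dx)=\sum_{k=0}^{n-1}p_{k}s^{k}$, so subtracting this from \eqref{f_taylor_exp} at level $n$ leaves $\int_{(n-1,\infty)}s^{x}\,\mu(dx)=p_{n}s^{n}+o(s^{n})$ as $s\to 0+$. Pushing the finite measure $\mu|_{(n-1,\infty)}$ forward under $x\mapsto x-(n-1)$ to a measure $\nu$ on $(0,\infty)$ and dividing by $s^{n-1}$, the step reduces to the following sub-lemma: \emph{if $\nu$ is a finite measure on $(0,\infty)$ with $\int_{(0,\infty)}s^{u}\,\nu(du)=cs+o(s)$ as $s\to 0+$, then $\nu|_{(0,1]}=c\,\delta_{1}$}. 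To prove it, divide by $s$ and split the integral over $(0,1)$, $\{1\}$ and $(1,\infty)$: the integral over $(1,\infty)$ tends to $0$ by dominated convergence; the contribution of $\{1\}$ is the constant $\nu(\{1\})$; and on $(0,1)$ the integrand $s^{u-1}$ increases to $+\infty$ as $s\to 0+$, so by monotone convergence $\int_{(0,1)}s^{u-1}\,\nu(du)$ stays bounded only if $\nu((0,1))=0$. Since the three non-negative pieces sum to $c+o(1)$, we conclude $\nu((0,1))=0$ and then $\nu(\{1\})=c\ge 0$. Translating back, $\mu$ puts no mass in $(n-1,n)$ and $\mu(\{n\})=p_{n}\ge 0$, which combined with $S(n-1)$ yields $S(n)$.

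I expect the only genuine subtlety to lie in the bookkeeping of the inductive step, in particular the observation that after shifting the mass above $n-1$ one should divide by $s^{n-1}$ (equivalently, work only to order $s$ beyond the shift): this is exactly what makes the contribution of the "gap" $(n-1,n)$ a divergent integral $\int s^{u-1}\,\nu(du)$ unless that gap is $\mu$-null, which is the mechanism that forces the mass onto the integers. Everything else is a routine application of dominated and monotone convergence, using only $0\le s^{x}\le 1$ for $s\in(0,1]$, $x\ge 0$, together with the finiteness of $\mu$.
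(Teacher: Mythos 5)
Your proof is correct and is essentially the paper's own argument recast in terms of the law of $X$: the same induction on $n$, with the base case handled by dominated convergence and the inductive step forcing the mass in the gap $(n-1,n)$ to vanish because $s^{u-n}$ diverges there, while dominated convergence identifies the mass at $n$ with $p_n$. Your shift-and-rescale sub-lemma is just a repackaging of the paper's step of dividing by $s^n$ and splitting the expectation over $\{X\in(n-1,n)\}$ and $\{X\geq n\}$.
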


\begin{proof}
	We will prove the statement by induction. Due to (\ref{f_taylor_exp}),
	$$
	\EE s^{X}\to p_0 \quad \mbox{as}\ \  s\to 0+.
	$$ 
	On the other hand, since $s^{X}\rightarrow \ind_{\{X=0\}}$ a.s. as $s\to 0+$, we infer by dominated convergence 
	$$
	\EE s^{X}\to\PP\{X=0\} \quad \mbox{as}\ \  s\to 0+.
	$$ 
	Hence, $p_0$ is non negative and equals $\PP\{X=0\}$.
	
	For the induction step, let us assume that $\PP\{X=k\}=p_k$ for $k=1,\dots,n-1$ and ${X\in \lbrace0,1,\dots,n-1\rbrace\cup (n-1,\infty)}$ a.s. Again, by~\eqref{f_taylor_exp},
	$$
	\frac{1}{s^n}\left(\EE s^{X}-\sum_{k=0}^{n-1}p_k s^k\right)\to p_n\quad\mbox{as}\ \ s\to 0+.
	$$
	Using the induction assumption, we can write
	\begin{align*}
	\frac{1}{s^n}\left(\EE s^{X}-\sum_{k=0}^{n-1}p_k s^k\right)&=\frac{1}{s^n}\EE\left[ s^{X}-\sum_{k=0}^{n-1}\ind_{\{X=k\}}s^{X}\right]=\frac{1}{s^n}\EE\left[ \ind_{\{\xi>n-1\}}s^{X}\right]\\
	&=\EE\left[\ind_{\{X\in(n-1,n)\}}s^{X-n}\right]+\EE\left[\ind_{\{X\geq n\}}s^{X-n}\right].
	\end{align*}
	Again, by the dominated convergence theorem,
	$$
	\EE\left[\ind_{\{X\geq n\}}s^{X-n}\right]\to \PP\{X=n\}\quad\mbox{as}\ \ s\to 0+.
	$$
	Thus, $\EE\left[s^{X-n}\ind_{\{X\in(n-1,n)\}}\right]$ is bounded for $s\in(0,1]$, which implies
	$$
	\PP\{X\in(n-1,n)\}=0.
	$$ 
	This finishes the proof of the lemma.
\end{proof}  
  
We can now return to our main question and prove, that the trivial solutions we found for $\alpha\in\mathbb{N}$ and atomic $\mu_{0}$ must in fact be the only possible ones.
\begin{proof}[Proof of Theorem \ref{mainthm2}]
	Take $\alpha>0$, $\mu_{0}\in\mathcal{M}_{1}(E)$ and a solution $\mu$ to $(\text{MP})^{\alpha}_{\mu_{0}}$. Also, for $A\in\mathcal{B}(E)$ and fixed $t\in[0,T]$, let us abbreviate $h(x)=P_{t}\ind_{A}(x)$. Note that for bounded $A$, we can find $\delta>0$ with $0\le h(x)\leq 1-\delta$ for all $x\in E$.
	
	For such an $A$ and fixed $t>0$, let us consider the generating function $g$ of the real-valued random variable ${X=\alpha\mu_{t}(A)}$. 
The Laplace-duality of Theorem \ref{thm: duality} yields
\begin{equation}
\EE e^{-r\alpha\mu_{t}(A)}=\EE e^{-\langle\mu_{t},r\alpha\ind_{A}\rangle}=e^{\langle\mu_{0},\ln(P_{t}e^{-r\ind_{A}})\rangle}.
\end{equation}
But since $P_{t}e^{-r\ind_{A}}=1+(e^{-r}-1)h$ and setting $s=e^{-r}$, the previous display reads
\begin{equation}
g(s)=e^{\alpha\langle\mu_{0},\ln(1+(s-1)h)\rangle}=e^{\alpha \left\langle\mu_0,\ln (1-h)+\ln\left(1+\frac{h}{1-h}s\right)\right\rangle}.
\end{equation}	
	
	By the boundedness of $h$, the function $g$ is well-defined on $(-\delta,\infty)$. Moreover, it is infinitely differentiable. Thus, for each $n\in\NN^{0}$  
	$$
	g(s)=\sum_{k=0}^np_ks^k+o(s^n)\quad\mbox{on}\ \ (-\delta,\delta),
	$$
	by Taylor's theorem. Consequently, by Lemma~\ref{lemma_expansion} we know that $\alpha\mu_t(A)\in\NN^{0}$ a.s. So, we have proved that for each bounded $A\in\mathcal{B}(\RR^d)$ and $t>0$
	$$
	\mu_t(A)\in\left\{0,\frac{1}{\alpha},\ldots,\frac{\lfloor \alpha\rfloor}{\alpha}\right\}\quad \mbox{a.s.}
	$$
	Here, we also used the fact that $\mu_t$ is a probability measure a.s.  Next, making $A\uparrow E$, we obtain that 
	$$
	1=\mu_t(E)\leq\frac{\lfloor \alpha\rfloor}{\alpha}\leq 1\quad\mbox{a.s.}
	$$
	This implies that $\alpha\in\NN$. In order to make a conclusion about $\mu_0$, we take a bounded set $A$ with $\mu_0$-zero boundary and use the continuity of the process $\mu$. So, we obtain
	$$
	\mu_0(A)\in \left\{0,\frac{1}{\alpha},\ldots,\frac{\lfloor \alpha\rfloor}{\alpha}\right\}.
	$$  
	Hence, there exist $x_k$, $k\in\{1,\ldots,\alpha\}$ such that 
	$$
	\mu_0=\frac{1}{\alpha}\sum_{k=1}^{\alpha}\delta_{x_k}.
	$$
\end{proof}

\noindent\textbf{Acknowledgements} \quad The first author gratefully acknowledges funding by the European Social Fund through the project 100234741. The research of the second author was supported by Alexander von Humboldt Foundation.

\bibliographystyle{plain}

\bibliography{exist}

\end{document}